\documentclass[12pt]{article}
\usepackage{geometry}                
\usepackage{graphicx,color,mathtools}
\usepackage{amssymb,amsmath,amsthm,mathrsfs}
\usepackage[all,cmtip]{xy}
\usepackage{epstopdf, comment, url}
\usepackage{bm} 
\usepackage{enumitem}

\usepackage[pdftex,bookmarks,pdfnewwindow,plainpages=false,unicode,pdfencoding=auto]{hyperref}

 \hypersetup{
pdfauthor={Oleg Ivrii},
pdftitle={Non-tangential ranges of holomorphic functions at Plessner points},
pdfsubject={Potential theory},
bookmarksdepth={4}
}

\numberwithin{equation}{section}

\newtheorem{theorem}{Theorem}[section]

\newtheorem{lemma}[theorem]{Lemma}
\newtheorem{corollary}[theorem]{Corollary}

\theoremstyle{remark}
\newtheorem*{remark}{Remark}

\theoremstyle{definition}

\DeclareMathOperator{\Mdim}{M.dim }

\DeclareMathOperator{\Hdim}{H.dim }

\DeclareMathOperator{\osc}{osc}
\DeclareMathOperator{\capacity}{cap}

\title{Non-tangential ranges of holomorphic functions \\ at Plessner points}
\author{Oleg Ivrii}
\date{September 16, 2026}

\begin{document}

\maketitle

\begin{abstract}
Consider the random lacunary series $f(z) = \sum_{k=1}^\infty \frac{\xi_k}{\sqrt{k}} \, z^{2^k}$ on the unit disk, where $\{ \xi_k \}$ are independent standard complex Gaussian random variables. We show that almost surely, a.e.~$\zeta \in \partial \mathbb{D}$ is a Plessner point of $f$, yet the image of every Stolz angle with vertex at $\zeta$ has asymptotic density zero. This gives a negative answer to questions of Collingwood and Baernstein concerning possible strengthenings of Plessner's theorem.

In this example, for a.e.~$\zeta \in \partial \mathbb{D}$, the non-tangential range of $f$ at $\zeta$ has zero area.
The non-tangential range cannot be much smaller: we show that for an arbitrary holomorphic function on the unit disk, the non-tangential range has Hausdorff dimension 2 at almost every Plessner point.
\end{abstract}

\section{Introduction}

Let $f$ be a holomorphic function on the unit disk $\mathbb{D} = \{ z \in \mathbb{C} : |z| < 1 \}$.
A point $\zeta \in \partial \mathbb{D}$ is called a {\em Fatou point} of $f$ if $f$ has a finite
non-tangential limit at $\zeta$, and a {\em Plessner point} if the image of every truncated
Stolz angle with vertex at $\zeta$ is dense in $\mathbb{C}$. Plessner's
theorem asserts that a.e.~point on the unit circle is either a Fatou point or a Plessner point. See the original paper  \cite{plessner} or
any of the books \cite{collingwood-lohwater, garnett-marshall, pommerenke}.

Collingwood asked whether Plessner's theorem remains true if the condition that ``the image
of every truncated Stolz angle is dense in $\mathbb{C}$'' is replaced by the stronger condition ``the image of every truncated
Stolz angle omits only a set of area zero.'' Later, Baernstein asked the same question with area replaced by logarithmic capacity.
His question is the stronger of the two, since a set of zero capacity has zero
area. These questions are recorded as Problems 5.20 and 5.57 in the fiftieth anniversary edition of Hayman's list
\cite{hayman}, while Collingwood's problem appeared among the 141 problems in the original 1967 list \cite{hayman-original}.

In this paper, we show that the answers to both questions are negative. Our counterexample is a random lacunary series
\begin{equation}
\label{eq:f-def}
f(z) = \sum_{k=1}^\infty \frac{\xi_k}{\sqrt{k}} \, z^{2^k},
\end{equation}
where $\xi_k \sim \mathcal N_{\mathbb{C}}(0,1)$ are independent standard complex Gaussian random variables. 

For $0<\alpha<\pi$, let $R_\alpha(f,\zeta)$ denote the set of values $a\in\mathbb C$ that $f$ attains along a sequence of points tending to $\zeta$ inside the Stolz angle with vertex at $\zeta$ and opening $\alpha$.
We define the {\em non-tangential range} of $f$ at $\zeta$ as $$R_{\Delta}(f, \zeta) = \bigcup_{0 < \alpha < \pi} R_\alpha(f, \zeta).$$
From the definitions, it follows that $R_\alpha(f,\zeta)$, $0 < \alpha < \pi$, are $G_\delta$ sets and $R_{\Delta}(f, \zeta)$ is a $G_{\delta \sigma}$ set.

\begin{theorem}
\label{main-thm}
Almost surely, the random lacunary series (\ref{eq:f-def}) converges absolutely and uniformly on compact subsets of the unit disk and defines a holomorphic function with the following properties:

{\em (i)} For a.e.~$\zeta\in\partial\mathbb D$, we have
$\liminf_{r \to 1} |f(r\zeta)| = 0$ and $\limsup_{r \to 1} |f(r\zeta)| = \infty$.
Consequently, by Plessner's theorem, a.e.~point on the unit circle is a Plessner point.

{\em (ii)} For a.e.~$\zeta \in \partial \mathbb{D}$, the non-tangential range $R_\Delta(f, \zeta)$ has two-dimensional Lebesgue measure zero.

{\em (iii)} For every $0 < \alpha < \pi$ and a.e.~$\zeta \in \partial \mathbb{D}$,
$$
\frac{m_2 \bigl (f(\Gamma_\alpha(\zeta)) \cap B(0,R) \bigr )}{\pi R^2} \to 0, \qquad \text{as }R \to \infty.
$$
\end{theorem}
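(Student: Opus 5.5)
The plan is to exploit the near-Brownian structure of $f$ along radii. Convergence is immediate: $|\xi_k| = O(\sqrt{\log k})$ almost surely by Borel--Cantelli, so the series converges absolutely and locally uniformly in $\mathbb D$. Write $r_n = 1-2^{-n}$. Then $|z|^{2^k}$ is essentially $1$ for $k \ll n$ and super-exponentially small for $k \gg n$. Hence, along the radius to $\zeta$ and within a Stolz angle at height $1-|z|\asymp 2^{-n}$, we have
\[
f(z) \approx S_n(\zeta) + E_n(z), \qquad S_n(\zeta) = \sum_{k<n-C} \frac{\xi_k}{\sqrt k}\,\zeta^{2^k},
\]
where $E_n$ is a Gaussian ``local fluctuation'' involving only the $O(1)$ terms with $|k-n|\le C$. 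Its variance is $O(1/n)$, up to a tail in $C$ that decays geometrically or super-exponentially. For fixed $\zeta$, the partial sums $S_n(\zeta)$ form a complex Gaussian random walk with step variances $1/k$. In the time scale $t=\log n$ this walk is a planar Brownian motion, since $\sum_{k\le n}1/k \approx \log n$.

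For (i), note that $\mathrm{Var}\, S_n \sim \log n \to\infty$. Neighbourhood recurrence of planar Brownian motion, transferred to the walk via a law of the iterated logarithm / Kolmogorov 0--1 argument, gives $\liminf |S_n| = 0$ and $\limsup|S_n|=\infty$ almost surely for each fixed $\zeta$. The events are tail events in $\{\xi_k\}$, so they hold with probability one. Fubini over $(\omega,\zeta)$ then yields the statement for a.e.~$\zeta$ almost surely. Between consecutive $r_n$ the function differs from $S_n$ by $O(n^{-1/2}\sqrt{\log n})$, which gives $\liminf|f(r\zeta)|=0$. Finally, since there is no finite radial limit, Plessner's theorem yields Plessner points.

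For (ii) and (iii), the key is that the Stolz-angle image at scale $2^{-n}$ is contained in a disk around $S_n(\zeta)$ of radius
\[
\rho_n \lesssim_\alpha \frac{\sqrt{\log n}}{\sqrt n}
\]
almost surely for all large $n$. To get this, bound $\sup_{\Gamma_\alpha(\zeta)\cap\{1-|z|\asymp 2^{-n}\}}|E_n|$ by a Gaussian sup estimate together with Borel--Cantelli over the $O(2^n)$ dyadic boxes. Then
\[
R_\alpha(f,\zeta) \subset \bigcap_N \overline{\bigcup_{n\ge N} B\bigl(S_n(\zeta),\rho_n\bigr)}.
\]
The goal is to show that the area of $\bigcup_{n\ge N} B(S_n,\rho_n)$ tends to $0$ in expectation. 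This would follow from the heuristic that a planar Brownian motion run for time $\log n$ has a Wiener sausage of radius $1/\sqrt n$, i.e.\ of radius $e^{-t/2}$ at time $t$. The walk moves at most $O(\sqrt{\log n/n})$ per step, so the union has area comparable to the sausage whose radius shrinks exponentially in time. Over the time interval $[t,t+1]$ the sausage contributes area $\lesssim 1/\log(1/\text{radius}^2)\approx 1/t$ by the classical estimate for planar sausages. This sum diverges, so a sharper count is needed. The right quantity is the expected area of the set of points $w$ in $B(0,R)$ that are hit infinitely often within distance $\rho_n$. By the Green function estimate, the probability that the walk at time $t$ comes within $e^{-t/2}$ of a fixed $w$ in the window $[t,2t]$ is $\asymp \log 2/\log(e^{t})\cdot$(const), i.e.\ $\asymp 1/t$ times the probability of being near $w$ at all. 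Summed over dyadic windows $t=2^j$ this is finite, so Borel--Cantelli shows each fixed $w$ is a.s.\ not in $R_\alpha$. Fubini in $w$ then gives $m_2(R_\alpha(f,\zeta))=0$ a.s.\ for a.e.\ $\zeta$, and taking $\alpha\uparrow\pi$ along a sequence gives (ii).

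For (iii), the same hitting estimate is used, but at finite levels. The set $f(\Gamma_\alpha(\zeta))\cap B(0,R)$ is covered by the sausage pieces. A point $w$ with $|w|\asymp R$ is reached first at time $t\gtrsim \log R^2$, where the radius is $e^{-t/2}$. The probability that it is ever covered is then $\lesssim \sum_j 1/2^j$ over windows starting at $\log R^2$, which is $O(1/\log R)\to 0$. Integrating over $B(0,R)$ gives expected density $O(1/\log R)$. To upgrade to almost sure convergence, take $R=2^{2^m}$ along a sparse sequence, use Chebyshev and Borel--Cantelli, and interpolate monotonically, which costs only a constant factor. The main obstacle is rigorously justifying this hitting estimate: the probability that the discrete Gaussian walk with variances $1/k$ comes within $\sim n^{-1/2}$ of $w$ during $n\in[N,N^2]$ is $O(1/\log N)$. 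I would derive it from the local CLT in the form $P(|S_n - w|<\rho)\lesssim \rho^2/\log n$, via a first-hitting decomposition (the planar ``last exit'' argument). Making this estimate uniform in $\zeta$ requires treating the $\zeta^{2^k}$ phases, which is harmless because each $\xi_k\zeta^{2^k}$ is again a standard complex Gaussian.
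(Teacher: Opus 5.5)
\paragraph{The gap is in the almost sure upgrade in (iii).} Your window sum gives only $\mathbb{E}[\text{density at }R]=O(1/\log R)$. You then sample along $R_m=2^{2^m}$ to make this summable, and claim that monotone interpolation costs only a constant factor. It does not. Since $R_{m+1}=R_m^2$, monotonicity gives, for $R_m\le R\le R_{m+1}$, only
\[
\frac{m_2(E\cap B(0,R))}{\pi R^2}\le \Bigl(\frac{R_{m+1}}{R_m}\Bigr)^2\frac{m_2(E\cap B(0,R_{m+1}))}{\pi R_{m+1}^2},\qquad \Bigl(\frac{R_{m+1}}{R_m}\Bigr)^2=R_m^2\to\infty .
\]
No other choice of sequence rescues this. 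If $R_{j+1}/R_j$ stays bounded, then $\sum_j 1/\log R_j=\infty$, so a first-moment bound of order $1/\log R$ plus Borel--Cantelli cannot give convergence through all $R$. As written, you get density zero only along $R_m$ and in probability.

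The weak rate comes from a time-scale slip. $S_n(\theta)$ has variance $H_n\approx\log n$, so it reaches $|w|\asymp R$ at Brownian time $t\asymp R^2$ (that is, $n\approx e^{R^2}$). It does not reach it at $t\approx\log R^2$; that would be the case for a unit-step walk in $n$-time. Redo your window sum starting at $t\asymp|w|^2$, using heat-kernel decay for the earlier windows. This gives $\mathbb{P}(w\text{ covered})\lesssim|w|^{-2}$, hence expected area $\lesssim\log R$ in $B(0,R)$ and expected density $\lesssim\log R/R^2$. That is summable along $R=2^j$, where interpolation does cost only a factor $4$.

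This is the paper's route (Lemmas~\ref{one-unit-shrinking-sausage} and~\ref{avoidance-lemma4}). The $n$-th unit slice of the shrinking sausage has expected area $\lesssim 1/n$ by Spitzer's estimate. For $n>R^2$, the heat-kernel bound $p_n\le 1/(\pi n)$ cuts its contribution inside $B(0,R)$ to $\lesssim R^2/n^2$. So the divergent sum you noticed becomes $O(\log R)$.

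\paragraph{Parts (i) and (ii).} These follow the paper's scheme: shadowing by $S_n$, the limsup set of shrinking balls, a per-point avoidance statement, then Fubini in $w$ and in $\zeta$. Three points need repair.
\begin{itemize}
\item Your displayed inclusion with closures is vacuous. $(S_n)$ accumulates at every point of $\mathbb{C}$, so the right-hand side is the whole plane. You need $\bigcap_N\bigcup_{n\ge N}B(S_n,\rho_n)$ without closures, which is what you eventually use.
\item In (i), $\{\liminf|S_n|=0\}$ is not a tail event, since changing $\xi_1$ translates every $S_n$. In any case a $0$--$1$ law alone would not give probability one. What is actually needed is control of the oscillation of $B$ on $[H_n,H_{n+1}]$, as in the paper via Lemma~\ref{maximum-bm} and Borel--Cantelli.
\item The ``main obstacle'' you identify is not one. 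The increments of $S_n(\theta)$ are exactly complex Gaussians of variance $1/(n+1)$, so $S_n(\theta)=B_{H_n}$ for a genuine planar Brownian motion. Any continuous-time avoidance or hitting bound then transfers to the samples without a local CLT. For (ii), the paper simply quotes Spitzer's test, $|B_t-a|>e^{-\beta t}$ eventually (Lemma~\ref{avoidance-lemma1}).
\end{itemize}
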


The preceding example shows that at a.e.~Plessner point, the non-tangential range can have zero measure. Our second result says that it nevertheless has full Hausdorff dimension:

\begin{theorem}
\label{ntr-dim2}
Let $f: \mathbb{D} \to \mathbb{C}$ be a holomorphic function. For every $0 < \alpha < \pi$, at almost every Plessner point $\zeta \in \partial \mathbb{D}$, the set $R_{\alpha}(f, \zeta)$ has Hausdorff dimension 2.
\end{theorem}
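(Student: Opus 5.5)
The plan is to build, inside $R_\alpha(f,\zeta)$, a Cantor-type set of dimension $\ge 2-\varepsilon$ for every $\varepsilon>0$. The tool is the conformal martingale $f(B_t)$, where $B_t$ is a Brownian motion in $\mathbb D$ started at $0$ and stopped on exiting. First I would fix $\alpha'<\alpha$ and reduce the statement to a Brownian one. If a Stolz angle $\Gamma_{\alpha'}(\zeta)$ is fattened by a fixed hyperbolic radius, it stays inside $\Gamma_\alpha(\zeta)$. So it suffices to produce, for each $n$, values attained by $f$ on hyperbolic disks centred at points of $\Gamma_{\alpha'}(\zeta)\cap\{|z|>1-1/n\}$.

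By Lévy's theorem, $f(B_t)=W_{\tau(t)}$ is a time-changed planar Brownian motion with clock $\tau(t)=\int_0^t |f'(B_s)|^2\,ds$. By Doob's theorem on conformal martingales together with Plessner's theorem, for a.e.\ Plessner point $\zeta$ the following holds. Conditional on $B$ exiting at $\zeta$ (an $h$-process, which spends infinitely long in $\Gamma_{\alpha'}(\zeta)$ near $\zeta$), the limit $\tau(\infty)$ is infinite almost surely. Otherwise $f(B_t)$ would converge, and hence $f$ would have a non-tangential limit at $\zeta$. Recurrence of planar Brownian motion then shows that every tail $\{f(B_t): t\ge T,\ B_t\in \Gamma_{\alpha'}(\zeta)\}$ is dense in $\mathbb C$. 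Plessner's theorem alone gives only this density, so a Baire-category argument would yield only a dense $G_\delta$; the quantitative step below is what produces dimension.

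Next comes a nested covering construction. At stage $n$ we have finitely many closed disks $D$ of radius $r_n$. Each is contained in $f(U)$ for some hyperbolic disk $U$ of fixed radius centred at a point of $\Gamma_{\alpha'}(\zeta)$ with $|z|>1-1/n$. Inside each such $D$, we follow a later piece of the tail of the conditioned path. By the open mapping theorem, at each path point $B_t$ the image $f(\text{hyperbolic disk around }B_t)$ contains a Euclidean disk around $f(B_t)$. For a compact path segment, compactness gives a uniform radius $\rho>0$. Consequently $f$ covers the $\rho$-neighbourhood of the image segment, which is a planar Brownian path.

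We then select $N_{n+1}\approx (r_n/r_{n+1})^{2-\varepsilon}$ well-separated children of radius $r_{n+1}\le\rho$ inside $D$ along that path. Each child is covered by $f$ near $\zeta$ at depth at least $n+1$. A point in the intersection of the nested disks is then attained in every truncated angle $\Gamma_\alpha(\zeta)\cap\{|z|>1-1/n\}$, so it lies in $R_\alpha(f,\zeta)$. The mass distribution principle, with mass split equally among children, gives Hausdorff dimension $\ge 2-\varepsilon$, provided the number of children is comparable to $(r_n/r_{n+1})^{2-\varepsilon}$ at every stage. In addition, the intermediate scales must be controlled, which follows from the children being roughly evenly spread in the parent.

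The main obstacle is exactly this counting. The covering radius $\rho$ depends on the path segment, and the path must run long enough to fill the parent disk at scale $\rho$, so the choices look circular. My plan to break the circularity is to use the scale invariance of planar Brownian motion. At a fixed time scale, the image path $W$ run for clock time $\gtrsim r_n^2$ inside $D$ visits $\gtrsim (r_n/\delta)^{2-\varepsilon}$ of the $\delta$-squares of $D$, for all small $\delta$ (the Wiener sausage/occupation estimate). The remaining issue is to bound $\rho$ from below, uniformly in terms of $\delta$, along that segment.

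For the lower bound on $\rho$ I would split into two cases. If $(1-|z|)f^\#(z)\not\to 0$ along the path in $\Gamma_{\alpha'}(\zeta)$, I would apply a Zalcman or normal-families argument. It produces a nonconstant limit of $f\circ\varphi_{z_k}$, and by Hurwitz a whole open set lies in $R_\alpha(f,\zeta)$, so this case is trivial. In the little-normal case, the maps $f\circ\varphi_z$ are nearly constant on hyperbolic disks. I would then apply Koebe/Bloch-type estimates to $(f\circ\varphi_z - f(z))/((1-|z|^2)f'(z))$ to get $\rho\gtrsim (1-|z|)|f'(z)|$. The clock increment $(1-|z|)^2|f'(z)|^2$ per unit hyperbolic time matches $\rho^2$, and this matching of scales is what makes the count $(r_n/r_{n+1})^{2-\varepsilon}$ achievable. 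I expect this step — making the Koebe lower bound on $\rho$ hold uniformly along a long enough path segment — to be where the real work lies.
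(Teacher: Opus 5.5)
There is a genuine gap, and it sits exactly at the step you flag as ``where the real work lies.'' Your whole counting argument rests on a lower bound $\rho\gtrsim(1-|z|)|f'(z)|$ holding uniformly along a segment on which the image path accumulates clock time $\gtrsim r_n^2$. This covers both the choice of $N_{n+1}\approx(r_n/r_{n+1})^{2-\varepsilon}$ children inside the $\rho$-neighbourhood and the mass-distribution bound at intermediate scales.

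The tools you name do not give that bound.
\begin{itemize}
\item Koebe's $1/4$ theorem needs univalence.
\item Bloch's theorem gives a schlicht disk that need not be centred at $f(z)$.
\item Landau's theorem gives a disk centred at $f(z)$ only under an a priori bound on $\sup_U|f-f(z)|/((1-|z|)|f'(z)|)$, and little-normality does not supply one.
\end{itemize}
For instance, $g(u)=a+\delta\epsilon(e^{u/\epsilon}-1)$ with $\delta$ tiny is spherically almost constant on $\{|u|<1/2\}$ and has $|g'(0)|=\delta$. Yet it omits $a-\delta\epsilon$, so its covering radius at $g(0)$ is at most $\epsilon|g'(0)|$. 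The estimate is therefore false pointwise. You would need a new statement, for example that the path spends most of its clock time, while inside the angle, at ``good'' points where a Landau bound holds, and nothing in the proposal provides this.

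There are further unproved points:
\begin{itemize}
\item Under the $h$-transform, $f(B_t)$ is no longer a time-changed Brownian motion. Recurrence and Wiener-sausage occupation estimates must be transferred by disintegrating the unconditioned law over exit points.
\item ``$f(B_t)$ converges $\Rightarrow$ non-tangential limit at $\zeta$'' is only a nontrivial almost-everywhere theorem, not a pointwise implication.
\item Recurrence alone does not make $\{f(B_t): B_t\in\Gamma_{\alpha'}(\zeta)\}$ dense, because the clock may accumulate outside the angle.
\item For a fixed sample path, every parent disk must be revisited by a segment that stays in $D$ for clock time $\gtrsim r_n^2$ while the preimage stays in the angle. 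This also needs proof.
\end{itemize}

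The idea you dismissed is the one that works, and it is what the paper uses: Baire category does yield dimension once ``dense'' is upgraded to ``meets every compact set of positive capacity.'' Frostman's lemma says a holomorphic function omitting a compact set of positive capacity lies in the Nevanlinna class. Transplanted to a rectifiable sawtooth domain via the Riemann map and the F.~and M.~Riesz theorem, it shows that at a.e.~Plessner point every truncated image $f(\Gamma_\alpha(\zeta,h))$ meets every such set.

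Apply this to all cylinders of a translate $w+K$, where $K$ is a self-similar Cantor set of dimension $d$; every cylinder has positive capacity. Then $f(\Gamma_\alpha(\zeta,2^{-n}))\cap(w+K)$ is relatively open and dense in $w+K$, so Baire gives $R_\alpha(f,\zeta)\cap(w+K)\neq\varnothing$. Fubini in $w$ shows that $R_\alpha(f,\zeta)-K$ has full measure. Marstrand's inequality $\Hdim(A\times K)\le\Hdim A+\Mdim K$ then gives $\Hdim R_\alpha(f,\zeta)\ge 2-d$, and letting $d\to0$ finishes the proof. No quantitative covering estimate along paths is needed.
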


\begin{remark}
(i) Recently, Gardiner and
Manolaki  \cite{gardiner-manolaki} showed that Plessner's theorem can be strengthened in a different direction. In particular, their result implies that at almost every
non-Fatou point $\zeta \in \partial \mathbb{D}$,
$$
\int_{\Gamma_\alpha(\zeta) \cap f^{-1}(B(w,r))} |f'(z)|^2\,dm_2(z)=\infty,
$$
 for every $0 < \alpha < \pi$ and ball $B(w, r) \subset \mathbb{C}$. Informally, this says that $f$ spends an infinite amount of time in every ball in the complex plane. Our example shows that $f$ can linger in balls without covering many points in them.
 
(ii) By looking at a super-lacunary series with exponents $2^{2^k}$ instead of $2^k$, one can show that the answer to Collingwood's question remains negative even if one replaces Stolz angles with horoballs.
\end{remark}

\subsection{Notation}

Let $0 < \alpha < \pi$. 
By a {\em Stolz angle} with vertex at $\zeta \in \partial \mathbb{D}$ and opening angle of $\alpha$, we mean 
$$
\Gamma_\alpha(\zeta) = \bigl \{ z \in \mathbb{D} \, : \,  |\zeta - z| < A(\alpha) \, (1-|z|) \bigr \}, \qquad A(\alpha) = \sec (\alpha/2).
$$
For $0 < h < 1$, we define the {\em truncated Stolz angle} as
$$
\Gamma_\alpha(\zeta, h) \, = \, \Gamma_\alpha(\zeta) \cap \{ 1 - h < |z| < 1 \}.
$$
Finally, for $\zeta \in \partial \mathbb{D}$ and $n \ge 0$, let 
$$V_\alpha(\zeta, n) = \Gamma_\alpha(\zeta) \cap \bigl \{1 - 2^{-n} \le |z| < 1 - 2^{-n-1} \bigr \}.$$
The sets $V_\alpha(\zeta, n)$ are pairwise disjoint and their union is all of $\Gamma_{\alpha}(\zeta)$.
When we have a particular value of $\alpha$ in mind, we omit it from the notation.

\section{Background in probability}
\label{sec:probability}

In this paper, a standard complex Gaussian has variance 1: for every measurable set $E \subset \mathbb{C}$,
\begin{equation}
\label{eq:gaussian-def}
\mathbb{P} ( \xi \in E ) \, = \, \frac{1}{\pi} \int_{E} e^{-|z|^2} \, dm_2(z).
\end{equation}
Likewise, we normalize planar Brownian motion so that $\mathbb{E}|B_t|^2=t$.
In this section, we collect several properties of Brownian motion that will be used below. 

While planar Brownian motion visits every neighbourhood of any fixed point in the plane, it almost surely misses the point itself. A classical theorem of Spitzer \cite[Theorem~1]{spitzer} quantifies how close it can approach. We will use the special case with $g(t)=\sqrt{t}\, e^{\beta t}$\,:

\begin{lemma}
\label{avoidance-lemma1}
Let $B_t$ be planar Brownian motion started at the origin. For every fixed $a \in \mathbb{C}$ and $\beta > 0$, almost surely,
\begin{equation*}
|B_t - a| > e^{-\beta t},
\end{equation*}
for all sufficiently large $t$. 
\end{lemma}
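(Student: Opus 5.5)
The plan is a direct Borel--Cantelli argument over dyadic time blocks, reducing the continuous-time statement to a countable family of hitting probabilities. By translation we may take $a$ arbitrary and note that planar Brownian motion started at $0$ is at a positive distance from $a$ on any bounded time interval away from $t=0$ only if $a\neq 0$. So the case $a = 0$ at small times needs care, but only large $t$ matters. Fix $\beta>0$ and for $n \ge 1$ consider the event
$$
E_n = \bigl\{ \exists\, t \in [n, n+1] : |B_t - a| \le e^{-\beta n} \bigr\}.
$$
If only finitely many $E_n$ occur, then for $t \in [n,n+1]$ with $n$ large we get $|B_t - a| > e^{-\beta n} \ge e^{-\beta t}$, which is the claim. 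Thus it suffices to show $\sum_n \mathbb{P}(E_n) < \infty$.

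To estimate $\mathbb{P}(E_n)$, condition on $B_n$ and use the Markov property. The first input is the classical two-dimensional hitting estimate. For Brownian motion started at $x$ with $|x-a| = \rho$, the probability of hitting the disk $\overline{B}(a,\varepsilon)$ before leaving $B(a,R)$ is $\log(R/\rho)/\log(R/\varepsilon)$, for $\varepsilon<\rho<R$. Within one unit of time the motion leaves $B(B_n, R)$ only with probability $O(e^{-cR^2})$. Take $R = n$, say, so that this Gaussian tail is summable. On the complementary event, the hitting probability is at most $\log(2n/\rho)/\log(n/\varepsilon)$ with $\varepsilon = e^{-\beta n}$, and this is $\lesssim \log(n/\rho)/(\beta n)$.

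Next we average over $\rho = |B_n - a|$. Since $B_n$ has density bounded by $1/(2\pi n)$, we get
$$
\mathbb{P}(\rho < \delta) \lesssim \delta^2/n.
$$
Split according to whether $\rho \ge e^{-\beta n/2}$ or not. On $\{\rho \ge e^{-\beta n/2}\}$ the bound above gives a probability of order $\log n/(\beta n)$. That is not summable, so the crude approach must be refined, and this is the main obstacle: a single-scale estimate only yields $O(1/n)$ per block. To fix it, I would instead use blocks $[2^j, 2^{j+1}]$ with threshold $e^{-\beta 2^{j+1}}$. Then $\log(1/\varepsilon) \asymp \beta 2^{j}$, while $\log(R/\rho)$ is typically $O(j)$, taking $R = 2^{j}$ and $\rho \gtrsim 2^{-j}$. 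This gives probability $O(j/2^{j})$, which is summable. The exceptional event $\rho < 2^{-j}$ has probability $O(2^{-2j}/2^j)$, and the exit event has Gaussian-small probability; both are summable. With these dyadic blocks, Borel--Cantelli finishes the proof, because for $t \in [2^j, 2^{j+1}]$ we have $e^{-\beta 2^{j+1}} \le e^{-2\beta t}\cdot$(adjusting $\beta$). Concretely, applying the argument with $\beta/2$ in place of $\beta$ gives $|B_t - a| > e^{-\beta 2^{j}} \ge e^{-\beta t}$ on that block.

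Alternatively, one could simply quote Spitzer's integral test \cite[Theorem~1]{spitzer}: the condition is $\int^\infty dt/(t\log g(t)) < \infty$. With $g(t) = \sqrt{t}\,e^{\beta t}$ we have $\log g \sim \beta t$, and the integral converges. This yields the lemma immediately, and the hands-on argument above is just its proof in this special case.
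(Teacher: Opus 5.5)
Your last paragraph is exactly the paper's route. The paper gives no argument of its own: it quotes Spitzer's Theorem~1 with $g(t)=\sqrt{t}\,e^{\beta t}$, for which $\int^\infty dt/(t\log g(t))$ converges.

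Your main dyadic Borel--Cantelli argument is a genuinely different, self-contained route, and it is correct. Use the threshold $\varepsilon_j=e^{-\beta 2^j}$ on the block $[2^j,2^{j+1}]$. Leaving $B(B_{2^j},2^j)$ during the block costs $O(e^{-2^{j-1}})$ by Lemma~\ref{maximum-bm}. The event $|B_{2^j}-a|<2^{-j}$ costs $O(2^{-3j})$ by the heat-kernel bound. Off these events, the annulus formula bounds the hitting probability by $\log(2^{2j+2})/\log(2^{j+2}/\varepsilon_j)=O(j/2^j)$. All three are summable, and since $e^{-\beta 2^j}\ge e^{-\beta t}$ on the block, the lemma follows.

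The two routes buy different things. The citation is short and sharp: Spitzer's test is an if-and-only-if criterion that also covers much more slowly decaying radii. Your argument uses only the tools already in Section~\ref{sec:probability} (the Gaussian maximal bound and the heat-kernel bound) plus the explicit two-dimensional hitting probability. It also shows why a radius exponentially small in Brownian time is harmless: there are only logarithmically many blocks, and $1/\log(1/\varepsilon_j)\asymp 2^{-j}$ beats the $O(j)$ loss in the numerator. After the time change $H_n\approx\log n$, that radius is polynomially small in $n$.

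A few slips to clean up:
\begin{itemize}
\item The inequality $e^{-\beta 2^{j+1}}\le e^{-2\beta t}$ goes the wrong way on $[2^j,2^{j+1}]$. Your ``concretely'' sentence is the right fix: run the argument with $\beta/2$, so the block threshold is the left-endpoint value $e^{-\beta 2^j}$.
\item The aside about $a=0$ is wrong, though irrelevant. Points are polar, so Brownian motion started at $0$ also stays at positive distance from $0$ on $[\delta,T]$.
\item The non-summability you found with unit blocks comes from confining the path to $B(a,2n)$, not from the method. Localizing to $|B_n-a|\lesssim\sqrt{\log n}$, and paying $O(n^{-3})$ for a larger unit-time displacement, already gives $O(\log n/n^2)$ per unit block.
\end{itemize}
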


We now consider a Wiener sausage whose radius shrinks exponentially with time:

\begin{lemma}
\label{avoidance-lemma4}
Let $B_t$ be planar Brownian motion started at the origin. For any $\gamma > 0$, let
$$
W_\gamma = \bigcup_{t \ge 0} B(B_t, e^{-\gamma t}).
$$
Then, almost surely,
\begin{equation}
\label{eq:avoidance-lemma4}
\lim_{R \to \infty} \frac{m_2(W_\gamma \cap B(0,R))}{\pi R^2} = 0.
\end{equation}
\end{lemma}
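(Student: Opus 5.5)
We have to prove Lemma~\ref{avoidance-lemma4}: the Wiener sausage $W_\gamma$ with exponentially shrinking radius $e^{-\gamma t}$ has asymptotic density zero in large balls.

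\textbf{Reduction to dyadic time blocks.} Split time into blocks $[n,n+1]$, $n \ge 0$, and set
$$W_\gamma^{(n)}=\bigcup_{t\in[n,n+1]} B(B_t,e^{-\gamma t}).$$
Each piece is contained in the $e^{-\gamma n}$-neighbourhood of the path segment $B[n,n+1]$. By Brownian scaling, the area of the $r$-neighbourhood of a unit-time Brownian path is of order $1/\log(1/r)$ in expectation. This is the classical Wiener sausage estimate of Spitzer and Le~Gall, but I will only need the cruder bound obtained by covering. Fix $M$. Cover the time interval by $M$ subintervals of length $1/M$. Over each subinterval the path lies in a disk of radius $\sup|B_s-B_{s_0}|$, which has all moments of order $M^{-1/2}$. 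This gives
$$\mathbb E\, m_2\bigl(W_\gamma^{(n)}\bigr)\le C\bigl(M\cdot M^{-1}+e^{-2\gamma n}\bigr)$$
uniformly in $n$. To get smallness I will use the sharper estimate
$$\mathbb E\, m_2\bigl(W^{(n)}_\gamma\bigr)\le \frac{C}{\gamma n},$$
which follows from the Wiener sausage asymptotics $\mathbb E\, m_2(\text{sausage of radius } r\text{ over unit time})\sim \pi/\log(1/r)$.

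\textbf{Comparing with the ball.} Write $A_T=m_2\bigl(\bigcup_{n\le T}W^{(n)}_\gamma\bigr)$. Then
$$\mathbb E A_T\le C\sum_{n\le T}\frac{1}{1+n}\approx C\log T.$$
Since $|B_t|\approx\sqrt t$, the sausage inside $B(0,R)$ consists essentially of times $t\lesssim R^2\log R$ or so. The later times contribute only on returns, which I control separately below.

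\textbf{Main obstacle: recurrence.} Planar Brownian motion is neighbourhood-recurrent, so arbitrarily late times revisit $B(0,R)$. I handle this by noting that the later visits only contribute area through tiny radii. The expected area added inside $B(0,R)$ from times in $[n,n+1]$ is at most
$$\frac{C}{\gamma n}\,\mathbb P\bigl(B[n,n+1]\cap B(0,R)\ne\emptyset\bigr)\lesssim \frac{C}{\gamma n}\cdot \frac{R^2}{n}$$
for $n\ge R^2$, using the Gaussian density bound. Summing over $n$ gives
$$\mathbb E\, m_2\bigl(W_\gamma\cap B(0,R)\bigr)\le C\log R + C \lesssim \log R.$$
This is $o(R^2)$ in expectation. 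Taking $R_j=2^j$, Markov's inequality together with Borel--Cantelli gives
$$m_2\bigl(W_\gamma\cap B(0,R_j)\bigr)\le R_j^{3/2}$$
eventually, almost surely. Monotonicity in $R$ then interpolates between consecutive $R_j$ and yields (\ref{eq:avoidance-lemma4}).

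If the sharp sausage asymptotic is unavailable, the crude covering bound already suffices. Choose $M=n$ in block $n$: the $1/M$-subintervals have displacement about $n^{-1/2}$, so each block has expected area $O(1)$. This gives $\mathbb E\, m_2\bigl(W_\gamma\cap B(0,R)\bigr)=O\bigl(R^2\cdot\sum_n \min(1,R^2/n)/R^2\bigr)$, which is too weak. So the $1/\log(1/r)$ sausage decay is exactly the key input I expect to need.
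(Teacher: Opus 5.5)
Your proposal follows the paper's route. You use unit time slices, Spitzer's asymptotic giving $\lesssim 1/n$ for the expected area of the $n$-th slice, and a Gaussian density bound producing an extra factor $R^2/n$ when $n>R^2$. This gives a total of $\lesssim\log R$, and you finish with $R_j=2^j$, Markov, Borel--Cantelli and monotonicity. The one step that does not hold up as written is the late-time estimate
$$
\mathbb{E}\,m_2\bigl(W^{(n)}_\gamma\cap B(0,R)\bigr)\le\frac{C}{\gamma n}\,\mathbb{P}(H),\qquad H=\bigl\{B[n,n+1]\cap B(0,R)\ne\varnothing\bigr\}.
$$
What you actually have is $m_2(W^{(n)}_\gamma\cap B(0,R))\le m_2(W^{(n)}_\gamma)\,\chi_H$. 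But $\mathbb{E}[m_2(W^{(n)}_\gamma)\chi_H]$ does not factor as $\mathbb{E}[m_2(W^{(n)}_\gamma)]\,\mathbb{P}(H)$. Both the area and the event $H$ depend on the increments $B_{n+s}-B_n$, $0\le s\le1$, so there is no independence to factor out. Heuristically, a block with a large excursion has both more area and a better chance of reaching $B(0,R)$. The inequality happens to be true up to constants: it follows from the paper's bound below, since $\mathbb{P}(H)\ge\mathbb{P}(|B_n|<R)\gtrsim R^2/n$ for $n\ge R^2$. But it is exactly the non-routine point of the argument, and ``expected area times hitting probability'' does not prove it.

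The missing idea is to separate the position of the slice from its shape. The paper writes $W^{(n)}_\gamma\subset B_n+K_n$ with $K_n=\bigcup_{0\le s\le1}B(B_{n+s}-B_n,e^{-\gamma n})$. This set is independent of $B_n$ and has the law of a unit-time sausage of radius $e^{-\gamma n}$. The paper then computes the expected area point by point:
$$
\mathbb{E}\,m_2\bigl((B_n+K_n)\cap B(0,R)\bigr)=\int_{\mathbb{C}}\mathbb{P}(u\in K_n)\,\mathbb{P}\bigl(B_n\in B(0,R)-u\bigr)\,dm_2(u)\le\frac{R^2}{n}\,\mathbb{E}\,m_2(K_n),
$$
using only $p_n\le 1/(\pi n)$. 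The supremum over translates of a Gaussian probability replaces your hitting probability, and no decoupling of area and displacement is needed.

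An alternative fix is to condition on the increments and use $\mathbb{P}(H\mid\text{increments})\le(R+D)^2/n$ with $D=\max_{0\le s\le1}|B_{n+s}-B_n|$. You would then truncate $D$ at height $\sqrt{8\log n}$ via Lemma~\ref{maximum-bm} and the bound $m_2(K_n)\le\pi(D+1)^2$. This costs an extra term of order $\log n/n^2$, which is harmless after summing over $n>R^2$. With this step supplied, the rest of your argument goes through as written.
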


\medskip

To prove the lemma, we split $W_\gamma$ into unit slices
$W_{\gamma,n} = \bigcup_{n \le t \le n+1} B(B_t, e^{-\gamma t})$, $n = 0, 1, 2, \dots$,
and estimate their areas separately. Since the radius varies only by a constant factor on each slice, it is natural to compare $W_{\gamma,n}$ with a Wiener sausage of constant radius. Let $K_\varepsilon$ be the $\varepsilon$-neighbourhood of $B([0,1])$. According to another classical result of Spitzer,
$$
\mathbb{E} m_2(K_\varepsilon) \sim \frac{\pi}{2 \log (1/\varepsilon)}, \qquad \text{as }\varepsilon \to 0,
$$
see \cite[p.~121]{spitzer2} or \cite[p.~167]{legall}.

\begin{lemma}
\label{one-unit-shrinking-sausage}
For $n \ge 1$ and $R \ge 1$, the expected area of the $n$-th slice of $W_\gamma$ intersected with $B(0,R)$ satisfies
$$
\mathbb{E} m_2 \bigl (W_{\gamma, n} \cap B(0,R) \bigr ) \lesssim
\begin{cases}
1/n, \qquad & n \le R^2, \\
R^2/n^2, \qquad & n > R^2.
\end{cases}
$$
\end{lemma}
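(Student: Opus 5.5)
Both bounds come from comparing the $n$-th slice with a constant-radius Wiener sausage over a unit time interval, combined with Brownian scaling and the Markov property at time $n$. For $t\in[n,n+1]$ the radius satisfies $e^{-\gamma t}\le e^{-\gamma n}=:\varepsilon_n$. Hence
$$W_{\gamma,n}\subset B_n + K^{(n)}_{\varepsilon_n},$$
where $K^{(n)}_{\varepsilon}$ is the $\varepsilon$-neighbourhood of the increment path $\{B_t-B_n : n\le t\le n+1\}$. By the Markov property this increment path is a Brownian motion on $[0,1]$ started at $0$, independent of $B_n$, and $B_n\sim$ a centred planar Gaussian with $\mathbb{E}|B_n|^2=n$. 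So $K^{(n)}_{\varepsilon_n}$ has the law of $K_{\varepsilon_n}$, and Spitzer's asymptotic (together with the trivial bound $m_2(K_\varepsilon)\le m_2(K_{1})$ for $\varepsilon\le 1$) gives
$$\mathbb{E}\, m_2(K_{\varepsilon_n})\lesssim \frac{1}{\log(1/\varepsilon_n)}=\frac{1}{\gamma n}\lesssim \frac1n \qquad (n\ge1).$$
The implicit constant may depend on $\gamma$, which is fine since $\gamma$ is fixed.

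\textbf{Case $n\le R^2$.} Drop the intersection with $B(0,R)$. The translate by $B_n$ does not change area, so
$$\mathbb{E}\, m_2(W_{\gamma,n}\cap B(0,R))\le \mathbb{E}\, m_2(K_{\varepsilon_n})\lesssim 1/n.$$

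\textbf{Case $n>R^2$.} Here we must use that the slice is unlikely to be near $B(0,R)$. Write
$$m_2(W_{\gamma,n}\cap B(0,R))=\int_{B(0,R)}\mathbf 1_{\{w\in W_{\gamma,n}\}}\,dm_2(w)$$
and apply Fubini. Condition on the increment path $K:=K^{(n)}_{\varepsilon_n}$. The event $w\in B_n+K$ is the event $B_n\in w-K$, so by independence
$$\mathbb{P}(w\in W_{\gamma,n}\mid K)\le \mathbb{P}(B_n\in w-K\mid K)\le \sup_x p_n(x)\cdot m_2(K),$$
where $p_n(x)=\frac{1}{\pi n}e^{-|x|^2/n}$ is the density of $B_n$ (with the normalization $\mathbb{E}|B_t|^2=t$). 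Thus $\sup p_n\lesssim 1/n$. Integrating over $w\in B(0,R)$ and then taking expectation in $K$ gives
$$\mathbb{E}\, m_2(W_{\gamma,n}\cap B(0,R))\lesssim R^2\cdot\frac1n\cdot \mathbb{E}\, m_2(K_{\varepsilon_n})\lesssim \frac{R^2}{n^2}.$$
This bound in fact holds for all $n$, and the first case is just the better bound when $R^2\ge n$.

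The only delicate point is the measurability and independence bookkeeping when conditioning on the increment process. A second point is the uniformity of Spitzer's asymptotic for all $n\ge1$ rather than only large $n$. That is handled by monotonicity of $m_2(K_\varepsilon)$ in $\varepsilon$ and finiteness of $\mathbb{E}\, m_2(K_1)$, which gives a bound $\lesssim 1/\log(1/\varepsilon)$ for all $\varepsilon\le e^{-\gamma}$.
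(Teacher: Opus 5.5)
Your proposal is correct and is essentially the paper's proof: the same inclusion $W_{\gamma,n}\subset B_n+K_n$, the independence of $B_n$ from the increment sausage, Spitzer's estimate $\mathbb{E}\,m_2(K_{\varepsilon_n})\lesssim 1/n$, and, for $n>R^2$, a Fubini computation with the bound $\sup p_n\le 1/(\pi n)$. You integrate the density over $w-K$ for fixed $w$, while the paper integrates it over $B(0,R)-u$ for fixed $u$; this is the same double integral. Your use of monotonicity in $\varepsilon$ and $\mathbb{E}\,m_2(K_1)<\infty$ to make Spitzer's bound uniform for small $n$ makes explicit a point the paper leaves implicit.
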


\begin{proof}
Let 
$$
K_n = \bigcup_{0 \le s \le 1} B(B_{n+s} - B_n, \varepsilon_n), \qquad \varepsilon_n = e^{-\gamma n}.
$$
 Since $e^{-\gamma(n+s)}\le\varepsilon_n$ for $0\le s\le1$, we have $W_{\gamma,n} \subset B_n + K_n$.
As the increments of Brownian motion over disjoint time intervals are independent, $K_n$ is independent of $B_n$ and has the same law as $K_{\varepsilon_n}$. Therefore,
\begin{align*}
\mathbb{E} m_2 \bigl ((B_n + K_n) \cap B(0,R) \bigr ) & = \int_{\mathbb{C}} \mathbb{P}(u \in K_n) \, \mathbb{P} \bigl (B_n \in B(0,R) - u \bigr ) dm_2(u) \\
& \le \sup_{u \in \mathbb{C}} \Bigl \{ \mathbb{P} \bigl (B_n \in B(0,R) - u \bigr ) \Bigr \} \, \int_{\mathbb{C}} \mathbb{P}(u \in K_n) dm_2(u).
\end{align*}
By Spitzer's theorem above,
$$
\int_{\mathbb{C}} \mathbb{P}(u \in K_n) dm_2(u) \, = \, \mathbb{E} m_2(K_n) \, \asymp \, \frac{1}{\log(1/\varepsilon_n)} \, \asymp \, \frac{1}{n}.
$$
For the first term above, we consider two cases:
\begin{itemize}
\item If $n \le R^2$, we simply bound the probability that $B_n \in B(0,R) - u$ by 1.

\item If $n > R^2$, we use that the heat kernel is bounded by
$$
p_n(z) \, = \, \frac{1}{\pi n} \, e^{-|z|^2/n} \, \le \, \frac{1}{\pi n}.
$$
Therefore,
$$
\mathbb{P} \bigl ( B_n \in B(0,R) - u \bigr ) \, = \, \int_{B(0,R) - u} p_n(z) dm_2(z) \, \le \, \frac{m_2(B(0,R))}{\pi n} \, = \, \frac{R^2}{n}.
$$
\end{itemize}
The lemma follows after combining these estimates.
\end{proof}

\begin{proof}[Proof of Lemma \ref{avoidance-lemma4}]
By Lemma \ref{one-unit-shrinking-sausage}, for any $R \ge 1$,
$$
\mathbb{E}m_2(W_\gamma \cap B(0,R)) \, \lesssim \, 1 + \sum_{1 \le n \le R^2} \frac{1}{n} +  \sum_{n > R^2} \frac{R^2}{n^2} \, \lesssim \, \log(R+1).
$$
For $j \ge 0$, set
$$
X_j = \frac{m_2(W_\gamma \cap B(0,2^j))}{m_2(B(0,2^j))}.
$$
Then,
$$
\mathbb{E}X_j  \lesssim \frac{j+1}{4^j}
\qquad
\text{and}
\qquad
\mathbb{E} \biggl ( \sum_{j=0}^\infty X_j  \biggr ) < \infty.
$$
In particular, $X_j \to 0$ almost surely, i.e.~the quotient in (\ref{eq:avoidance-lemma4}) tends to 0 through $R = 2^j$. Monotonicity considerations show that the quotient tends to 0 through all $R$. The proof is complete.
\end{proof}

Finally, we need an estimate for the maximum displacement of planar Brownian motion. We first recall the corresponding one-dimensional bound.
Let $W_t$ be one-dimensional Brownian motion with variance $t/2$. By the reflection principle and the Gaussian tail estimate,
\begin{equation}
\label{eq:reflection-principle}
\mathbb{P} \Bigl ( \max_{0 \le s \le t} |W_s| > a \Bigr )
 \le
2 \, \mathbb{P} \Bigl ( \max_{0 \le s \le t} W_s > a \Bigr )
 =
4 \, \mathbb{P} \bigl ( W_t > a \bigr )
\le
2 \exp \biggl ( - \frac{a^2}{t} \biggr),
\end{equation}
see \cite[Theorem~2.21]{morters-peres}.
\begin{lemma}
\label{maximum-bm}
Let $B_t$ be planar Brownian motion started at the origin and $M_t = \max_{0 \le s \le t} |B_s|$. Then,
$$
\mathbb{P}(M_t > r) \le 4 \exp \biggl ( - \frac{r^2}{2t} \biggr ).
$$
\end{lemma}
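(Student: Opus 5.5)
The plan is to reduce the planar estimate to the one-dimensional bound (\ref{eq:reflection-principle}) by splitting $B_t$ into coordinates. With our normalization $\mathbb{E}|B_t|^2 = t$, write $B_t = X_t + iY_t$. Then $X_t$ and $Y_t$ are independent one-dimensional Brownian motions, each with variance $t/2$, so each has the same law as the process $W_t$ in (\ref{eq:reflection-principle}).

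The key containment is
$$
\{ M_t > r \} \subset \Bigl\{ \max_{0 \le s \le t} |X_s| > r/\sqrt{2} \Bigr\} \cup \Bigl\{ \max_{0 \le s \le t} |Y_s| > r/\sqrt{2} \Bigr\}.
$$
It holds because $|B_s|^2 = X_s^2 + Y_s^2$: if both coordinates stay at most $r/\sqrt{2}$ in absolute value on $[0,t]$, then $|B_s| \le r$ throughout $[0,t]$.

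Applying (\ref{eq:reflection-principle}) with $a = r/\sqrt{2}$ bounds each of the two events by $2\exp(-a^2/t) = 2\exp(-r^2/(2t))$. A union bound then gives
$$
\mathbb{P}(M_t > r) \le 4\exp\bigl(-r^2/(2t)\bigr),
$$
which is exactly the stated estimate. Independence of the coordinates is not needed; only the union bound is used.

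The one step to check carefully is the variance normalization. Since $\mathbb{E}|B_t|^2 = \mathbb{E}X_t^2 + \mathbb{E}Y_t^2 = t$ and the two coordinates are identically distributed, each has variance $t/2$. This matches the hypothesis under which (\ref{eq:reflection-principle}) was stated. Once that is confirmed, the argument is just the containment and a union bound, and I do not expect any genuine obstacle.
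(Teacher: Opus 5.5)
Your proof is correct and is the paper's argument: split $B_t$ into coordinates $X_t, Y_t$, each a one-dimensional Brownian motion of variance $t/2$. Note that $|B_s|>r$ forces one coordinate to exceed $r/\sqrt{2}$ in absolute value, and apply (\ref{eq:reflection-principle}) with $a=r/\sqrt{2}$ together with a union bound (independence of the coordinates is indeed not needed).
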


\begin{proof}
Write $B_t = X_t + iY_t$. Since $\mathbb{E}|B_t|^2=t$, the real and imaginary parts $X_t$ and $Y_t$ are one-dimensional Brownian motions with variance $t/2$.
If $|B_s| > r$, then either
$$
|X_s| > \frac{r}{\sqrt{2}} \qquad \text{or} \qquad |Y_s| > \frac{r}{\sqrt{2}}.
$$
Therefore, by (\ref{eq:reflection-principle}),
$$
\mathbb{P}(M_t > r)
\, \le \,
\mathbb{P} \biggl ( \max_{0 \le s \le t} |X_s| > \frac{r}{\sqrt{2}} \biggr )
+
\mathbb{P} \biggl ( \max_{0 \le s \le t} |Y_s| > \frac{r}{\sqrt{2}} \biggr )
\, \le \,
4 \exp \biggl ( - \frac{r^2}{2t} \biggr ),
$$
as desired.
\end{proof}

\section{Basic properties}
\label{sec:basic-properties}

Let $f$ be the random lacunary series (\ref{eq:f-def}).  We work on $\Omega = \mathbb{C}^{\mathbb{N}}$, equipped with the product
$\mathbb{P}$ of standard complex Gaussian measures.  A point $\omega = (\xi_k)_{k=1}^\infty \in \Omega$ corresponds to a particular sequence of coefficients. Taking $r = k^{1/4}$ in the Gaussian tail estimate
$$
\mathbb{P} \bigl ( |\xi_k| > r \bigr ) \, = \, \frac{1}{\pi} \int_{|z| > r} e^{-|z|^2} \, dm_2(z) \, = \, e^{-r^2},
$$
and applying the Borel-Cantelli lemma, we conclude that almost surely,
\begin{equation}
\label{eq:bound-on-coefficients}
\frac{|\xi_k|}{\sqrt{k}} \le k^{-1/4}, \qquad \text{for all }k \ge k_0(\omega).
\end{equation}

\subsection{Shadowing property}

We denote the $n$-th partial sum of (\ref{eq:f-def}) at the point $e^{i\theta} \in \partial \mathbb{D}$ by
\begin{equation}
\label{eq:Sn-def}
S_n(\theta) \, = \, \sum_{k=1}^n \frac{\xi_k}{\sqrt{k}} \, e^{i 2^k \theta}, \qquad \theta \in [0, 2\pi),
\end{equation}
with the convention that $S_0(\theta)=0$. The following lemma says that on each dyadic piece of a Stolz angle, $f$ is uniformly close to the corresponding partial sum $S_n(\theta)$\,:

\begin{lemma}
\label{f-is-close-to-Sn}
Suppose the coefficients $(\xi_k)_{k=1}^\infty$ satisfy (\ref{eq:bound-on-coefficients}).

{\em (i)} The power series (\ref{eq:f-def}) defining $f$ converges uniformly and absolutely on compact subsets of the unit disk and defines a holomorphic function.

{\em (ii)} There exists a constant $C = C(\alpha,\omega)$ such that for every $\theta \in [0, 2\pi)$, $n \ge 0$ and $z \in V_\alpha(e^{i\theta}, n)$,
$$
|f(z) - S_n(\theta)| \le C (n+1)^{-1/4}.
$$
\end{lemma}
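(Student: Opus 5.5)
Part (i) follows directly from (\ref{eq:bound-on-coefficients}). For $|z| \le \rho < 1$ and $k \ge k_0(\omega)$ the $k$-th term is at most $k^{-1/4}\rho^{2^k}$. The finitely many terms with $k < k_0$ are polynomials. Hence the Weierstrass M-test gives absolute and uniform convergence on $\{|z| \le \rho\}$, so $f$ is holomorphic. It will also be convenient to set $M = M(\omega) = \sup_k |\xi_k|/\sqrt{k}$, which is finite by (\ref{eq:bound-on-coefficients}).

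For (ii), fix $\theta$, $n$ and $z = re^{i\phi} \in V_\alpha(e^{i\theta},n)$. Then $1-r \asymp 2^{-n}$, and the Stolz condition gives $|\phi - \theta| \lesssim_\alpha 1-r \le 2^{-n}$. We split
$$
f(z) - S_n(\theta) = \sum_{k=1}^n \frac{\xi_k}{\sqrt k}\bigl(z^{2^k} - e^{i2^k\theta}\bigr) + \sum_{k>n} \frac{\xi_k}{\sqrt k}\, z^{2^k}
$$
and estimate the two sums separately.

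\emph{Head ($k \le n$).} We use
$$
|z^{2^k} - e^{i2^k\theta}| \le (1 - r^{2^k}) + |e^{i2^k\phi} - e^{i2^k\theta}| \le 2^k(1-r) + 2^k|\phi-\theta| \lesssim_\alpha 2^{k-n}.
$$
For $k \ge k_0$ this gives a term of size at most $k^{-1/4} 2^{k-n}$. For $k < k_0$ it gives at most $M 2^{k-n}$, and these finitely many terms total $\lesssim M 2^{k_0-n}$, which is $\lesssim_\omega (n+1)^{-1/4}$. For the remaining terms, split the sum at $k = n/2$. The part with $k \le n/2$ is at most $\sum_{k\le n/2} 2^{k-n} \lesssim 2^{-n/2}$. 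The part with $n/2 < k \le n$ is at most $(n/2)^{-1/4}\sum_{k\le n} 2^{k-n} \lesssim n^{-1/4}$.

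\emph{Tail ($k > n$).} Here $|z|^{2^k} \le r^{2^k} \le e^{-2^k(1-r)} \le e^{-c\,2^{k-n}}$, using $1-r \ge 2^{-n-1}$. The terms with $k \ge k_0$ contribute at most $(n+1)^{-1/4}\sum_{j\ge1} e^{-c2^{j}} \lesssim (n+1)^{-1/4}$. If $n < k_0$, the finitely many $k$ with $n < k < k_0$ contribute at most $Mk_0$, which is $\lesssim_\omega (n+1)^{-1/4}$ because $n$ ranges over a finite set there. The case $n = 0$ is absorbed in the same way, since $f$ is bounded on the compact set $\{|z|\le 1/2\}$ and $V_\alpha(e^{i\theta},0) \subset \{|z| < 1/2\}$.

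Combining the head and tail estimates gives the bound with a constant $C = C(\alpha,\omega)$. The only mildly delicate point is keeping track of the dependence on $\omega$ through $k_0$ and $M$. The main idea is that the $k^{-1/4}$ decay only matters for $k$ comparable to $n$, and the exponential weights $2^{k-n}$ and $e^{-c2^{k-n}}$ localize both sums near $k \approx n$.
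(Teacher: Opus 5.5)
Your proposal is correct and follows the paper's proof. You split at $k=n$, bound the head by a Lipschitz estimate of order $2^{k-n}$, bound the tail by $r^{2^k}\le e^{-2^{k-n-1}}$, and absorb the finitely many exceptional coefficients into $C(\alpha,\omega)$. The differences are cosmetic: the paper bounds the head directly via $|z^{2^k}-\zeta^{2^k}|\le 2^k|z-\zeta|\le A(\alpha)\,2^{k-n}$ instead of polar coordinates, and it leaves implicit the final summation that you carry out by splitting at $k=n/2$.
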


\begin{proof}
(i) From the bound on the size of the coefficients, it follows that the radius of convergence of (\ref{eq:f-def}) is at least 1.

(ii) Let $\zeta = e^{i\theta}$. For $k \le n$, we have
$$
\bigl | z^{2^k} - \zeta^{2^k} \bigr | \, \le \, 2^k |z - \zeta| \, \le \, A(\alpha) \cdot 2^{k-n}.
$$
For $k > n$, the estimate $1 - x \le e^{-x}$, $0 < x < 1$, gives
$$
\bigl |z^{2^k} \bigr | \le (1 - 2^{-n-1})^{2^k} \, \le \, \exp \bigl (-2^{-n-1} \cdot 2^k \bigr ) \, = \, \exp (- 2^{k-n-1}).
$$
Therefore,
$$
|f(z) - S_n(\theta)| \lesssim \biggl ( \sum_{k \le n} k^{-1/4} 2^{k-n} + \sum_{k > n} k^{-1/4} e^{-2^{k-n-1}} \biggr ) = O \bigl ((n+1)^{-1/4} \bigr ),
$$
as desired. The finitely many exceptional coefficients in (\ref{eq:bound-on-coefficients}) are absorbed into the implicit constant.
\end{proof}

\subsection{Embedding the partial sums in a Brownian motion}
\label{sec:brownian-embedding}

For each fixed $\theta \in [0,2\pi)$, the increments
$$
S_{n+1}(\theta)-S_n(\theta) = (\xi_{n+1}/\sqrt{n+1}) \, e^{i 2^{n+1} \theta}
$$
are independent complex Gaussians with variance $1/(n+1)$. In particular, the law of $(S_n(\theta))$ does not depend on $\theta$. Set
$$
H_n=\sum_{k=1}^n \frac{1}{k}.
$$
Since $S_0(\theta)=0$ and $H_{n+1}-H_n=1/(n+1)$, we can embed the process $(S_n(\theta))$ into a planar Brownian motion $(B_t)$ started at the origin so that
$$
S_n(\theta)=B_{H_n}.
$$
As $H_n=\log n+O(1)$, the $n$-clock runs exponentially faster than the Brownian clock.

\subsection{\texorpdfstring{$S_n(\theta)$ does not come too close to a fixed point}{Sₙ(θ) does not come too close to a fixed point}}

The following lemma says that for fixed $\theta \in [0, 2\pi)$ and $a \in \mathbb{C}$, the partial sums $S_n(\theta)$ do not approach $a$ too rapidly:

\begin{lemma}
\label{far-away-from-a}
For any given pair $(\theta, a) \in [0,2\pi) \times \mathbb{C}$, almost surely,
$$
|S_n(\theta) - a| > n^{-1/8},
$$
for all sufficiently large $n$.
\end{lemma}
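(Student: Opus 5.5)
We reduce the statement to Spitzer's avoidance result, Lemma~\ref{avoidance-lemma1}, using the Brownian embedding of Section~\ref{sec:brownian-embedding}. Fix $(\theta,a)$. Since the law of $(S_n(\theta))_n$ does not depend on $\theta$ and its increments are independent complex Gaussians with variances $1/(n+1)$, we may realize $S_n(\theta)=B_{H_n}$ for a planar Brownian motion $B_t$ started at the origin, where $H_n=\sum_{k\le n}1/k$. The claim is a statement about the law of the sequence alone, so it suffices to prove that almost surely $|B_{H_n}-a|>n^{-1/8}$ for all large $n$.

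Apply Lemma~\ref{avoidance-lemma1} with this $a$ and some $\beta>0$ to be chosen. Almost surely there is $t_0$ with $|B_t-a|>e^{-\beta t}$ for all $t\ge t_0$. Since $H_n\to\infty$, we have $H_n\ge t_0$ for large $n$. This gives
$$
|S_n(\theta)-a| = |B_{H_n}-a| > e^{-\beta H_n}.
$$
Now use $H_n\le \log n+1$ for $n\ge1$, so that $e^{-\beta H_n}\ge e^{-\beta}\, n^{-\beta}$. Choose $\beta=1/16$. Then $e^{-\beta}n^{-1/16}\ge n^{-1/8}$ as soon as $n^{1/16}\ge e^{1/16}$, i.e.\ $n\ge e$. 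Hence $|S_n(\theta)-a|>n^{-1/8}$ for all sufficiently large $n$, almost surely.

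There is no real obstacle here; the only point needing care is that the conclusion concerns a single fixed pair $(\theta,a)$. The event $\{|S_n(\theta)-a|>n^{-1/8}\text{ eventually}\}$ is determined by the law of $(S_n(\theta))_n$. That law equals the law of $(B_{H_n})_n$, so the probability computed via the coupling is the correct one. One also checks that the embedding is legitimate: successive increments $B_{H_{n+1}}-B_{H_n}$ are independent complex Gaussians of variance $H_{n+1}-H_n=1/(n+1)$, matching $(\xi_{n+1}/\sqrt{n+1})e^{i2^{n+1}\theta}$ by rotation invariance of $\mathcal N_{\mathbb C}(0,1)$.
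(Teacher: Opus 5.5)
Your proposal is correct and is essentially the paper's proof. Both embed $S_n(\theta)=B_{H_n}$, apply Lemma~\ref{avoidance-lemma1} with $\beta=1/16$, and compare $e^{-H_n/16}$ with $n^{-1/8}$ via $H_n=\log n+O(1)$; you additionally make the constant explicit through $H_n\le \log n+1$ and note that the event depends only on the law of $(S_n(\theta))_n$.
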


\begin{proof}
Since $H_n = \log n + O(1)$, we may apply Lemma \ref{avoidance-lemma1} with $\beta = 1/16$ to obtain
$$
|S_n(\theta) - a| \, > \, \exp \biggl (-\frac{1}{16} \bigl (\log n + O(1) \bigr ) \biggr ) \,\ge\, C n^{-1/16} \, > \, n^{-1/8},
$$
for all sufficiently large $n$.
\end{proof}

\section{A Fubini argument}
\label{sec:fubini}

Since we will repeatedly use the following simple Fubini argument, we take a moment to formalize the setup.
We equip $[0, 2\pi) \times \Omega$ with the product probability measure $\mathcal P = \frac{d\theta}{2\pi} \otimes \mathbb{P}$.
Suppose that $E \subset [0, 2\pi) \times \Omega$ is a measurable set. We denote the horizontal and vertical slices of $E$ by
$$
E_\omega = \bigl  \{ \theta \in [0,2\pi) : (\theta, \omega) \in E \bigr  \}, \qquad \omega \in \Omega
$$
and
$$
E^\theta = \bigl \{ \omega \in \Omega : (\theta, \omega) \in E \bigr \}, \qquad \theta \in [0,2\pi)
$$
respectively.

\begin{lemma}
\label{fubini}
If $\mathbb{P}(E^\theta) = 1$ for every $\theta$, then  $|E_\omega| = 2\pi$ for a.e.~$\omega$.
\end{lemma}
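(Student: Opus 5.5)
The statement is a direct application of Tonelli's theorem to the indicator function $\mathbf 1_E$ on the product space $[0,2\pi)\times\Omega$, equipped with $\mathcal P=\frac{d\theta}{2\pi}\otimes\mathbb P$. Since $E$ is measurable and $\mathbf 1_E$ is nonnegative, both iterated integrals are defined and equal $\mathcal P(E)$. Two facts come with this: for each $\theta$ the slice $E^\theta$ is $\mathbb P$-measurable, and for each $\omega$ the slice $E_\omega$ is Lebesgue measurable.

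First, I integrate in $\omega$ first. Then
\[
\mathcal P(E)=\int_0^{2\pi}\mathbb P(E^\theta)\,\frac{d\theta}{2\pi}=\int_0^{2\pi}1\,\frac{d\theta}{2\pi}=1 .
\]

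Second, I integrate in $\theta$ first. This gives
\[
1=\mathcal P(E)=\int_\Omega \frac{|E_\omega|}{2\pi}\,d\mathbb P(\omega).
\]
The integrand $\omega\mapsto |E_\omega|/2\pi$ is measurable by Tonelli and takes values in $[0,1]$. So $1-|E_\omega|/2\pi$ is a nonnegative function with integral $0$, and hence it vanishes $\mathbb P$-almost surely. That is, $|E_\omega|=2\pi$ for a.e.~$\omega$.

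No step is a real obstacle. The one point needing care is that $E$ must be jointly measurable with respect to the product $\sigma$-algebra, so that Tonelli applies. This is assumed in the statement.

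When the lemma is applied later, for example with $E=\{(\theta,\omega): |S_n(\theta)-a|>n^{-1/8}\text{ eventually}\}$ from Lemma~\ref{far-away-from-a}, measurability holds because $E$ is built from countably many conditions on functions jointly continuous in $(\theta,\omega)$. I will check this separately in each application rather than in the lemma itself.
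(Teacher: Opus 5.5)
Your proposal is correct and is essentially the paper's argument: compute $\mathcal P(E)$ as an iterated integral in both orders, obtain $\int_\Omega |E_\omega|/2\pi \, d\mathbb P(\omega) = 1$, and use $|E_\omega| \le 2\pi$ to conclude. You are slightly more explicit than the paper about why the almost-everywhere conclusion follows and about joint measurability of $E$, which the paper simply builds into the hypothesis.
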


\begin{proof}
By Fubini's theorem,
$$
 \frac{1}{2\pi} \int_\Omega  |E_\omega| \,  d\mathbb{P}(\omega) \, = \, \mathcal P(E) \, = \, \frac{1}{2\pi} \int_0^{2\pi} \mathbb{P}(E^\theta) d\theta \, = \, 1.
$$
Hence,
 $|E_\omega| = 2\pi$ for a.e.~$\omega$. 
\end{proof}

We now give a sample application illustrating how the lemma will be used in the paper. Suppose that for each $\theta \in [0,2\pi)$ and $\omega \in \Omega$, we have a measurable set $Q(\theta,\omega) \subset \mathbb C$ and can show that for every fixed $\theta$, 
$$m_2(Q(\theta, \omega)) = 0, \qquad \text{almost surely}.$$
 Then, almost surely,
$$m_2(Q(\theta, \omega)) = 0, \qquad \text{for almost every }\theta.$$

\section{The Fatou set is almost empty}
\label{sec:fatou-empty}

In this section, we show Theorem \ref{main-thm}(i), which says that almost surely,
\begin{equation}
\label{eq:oscillation}
\liminf_{r \to 1} |f(re^{i\theta})| = 0, \qquad \limsup_{r \to 1} |f(re^{i\theta})| = \infty,
\end{equation}
for a.e.~$\theta \in [0, 2\pi)$.

\begin{proof}[Proof of Theorem \ref{main-thm}{\em (i)}]
{\em Step 1.} Fix $\theta \in [0,2\pi)$. Recall that the process $(S_n(\theta))$ has the same law as $(B_{H_n})$ where $H_n = \sum_{k=1}^n 1/k$.
Since planar Brownian motion is almost surely recurrent and unbounded,
$$
\liminf_{t \to \infty} |B_t| = 0, \qquad \limsup_{t \to \infty} |B_t| = \infty. 
$$
We show that both conclusions remain valid along the discrete sequence of times $H_n$. Set
$$
\osc_n :=  \sup_{H_n \le t \le H_{n+1}} |B_t - B_{H_n}|.
$$
As $H_{n+1}-H_n=1/(n+1)$, the Markov property of Brownian motion and Lemma \ref{maximum-bm} yield
$$
\mathbb P\bigl(\osc_n>n^{-1/3}\bigr)
\, \le \,
4\exp \biggl (-\frac{n+1}{2n^{2/3}} \biggr)
\, \le \,
4\exp\biggl (-\frac{n^{1/3}}{2} \biggr).
$$
Since the right hand side is summable, by the Borel-Cantelli lemma, $\osc_n \to 0$ almost surely. As the intervals
$[H_n,H_{n+1}]$ cover $[H_1,\infty)$, it follows that almost surely,
$$
\liminf_{n \to \infty} |B_{H_n}| = 0, \qquad \limsup_{n \to \infty} |B_{H_n}| = \infty.
$$
In other words, for each fixed $\theta \in [0,2\pi)$, almost surely
$$
\liminf_{n \to \infty} |S_n(\theta)| = 0, \qquad \limsup_{n \to \infty} |S_n(\theta)| = \infty.
$$

{\em Step 2.} 
Applying Fubini's theorem as in Section \ref{sec:fubini}, we conclude that almost surely
$$
\liminf_{n \to \infty} |S_n(\theta)| = 0, \qquad \limsup_{n \to \infty} |S_n(\theta)| = \infty,
$$
for a.e.~$\theta \in [0,2\pi)$.

\medskip

{\em Step 3.}
Set $r_n = 1 - 2^{-n}$.
By Lemma \ref{f-is-close-to-Sn}, almost surely
$$
|f(r_n e^{i\theta}) - S_n(\theta)| \to 0, \qquad \text{as }n \to \infty,
$$
uniformly in $\theta$. As a result, if $|S_n(\theta)|$ oscillates between 0 and $\infty$, then so does $f$ along the sequence
$r_n e^{i\theta}$. This proves (\ref{eq:oscillation}).
\end{proof}

\section{Non-tangential ranges have zero area}
\label{sec:footprint}

In this section, we show Theorem \ref{main-thm}(ii), which says that for a.e.~$\theta \in [0,2\pi)$, the non-tangential range $R_\Delta(f, e^{i\theta})$ has two-dimensional Lebesgue measure zero.

\begin{proof}[Proof of Theorem \ref{main-thm}{\em (ii)}]
It suffices to show that for any $0 < \alpha < \pi$, almost surely $m_2(R_\alpha(f, e^{i\theta})) = 0$ for a.e.~$\theta \in [0,2\pi)$.

\medskip

{\em Step 1.} Let $L_\theta = \bigcap_{m \ge 1} \bigcup_{n \ge m} B\bigl(S_n(\theta), n^{-1/8}\bigr)$ be the set of points in the plane that lie in infinitely many of the balls 
\begin{equation}
\label{eq:the-balls}
B(S_n(\theta), n^{-1/8}), \qquad n = 1,2, \dots.
\end{equation}
By Lemma \ref{far-away-from-a},
$
\mathbb P(a\in L_\theta)=0
$
for any fixed pair $(\theta,a) \in [0,2\pi) \times \mathbb{C}$.

\medskip

{\em Step 2.}
Fix a $\theta \in [0, 2\pi)$ and an $R > 0$. By Fubini's theorem,
\begin{align*}
\mathbb{E} \bigl [ m_2(L_\theta \cap B(0,R)) \bigr ]
& = \mathbb{E} \int_{B(0,R)} \chi_{\{a \in L_\theta\}} dm_2(a) \\ 
& = \int_{B(0,R)} \mathbb{P}  ( a \in L_\theta   )dm_2(a) \\
& = 0.
\end{align*}
Thus, for every fixed $\theta \in [0, 2\pi)$ and $R > 0$, almost surely $m_2(L_\theta \cap B(0,R)) = 0$. Intersecting these almost sure events over $R\in\mathbb N$, we conclude that for every fixed $\theta$, almost surely $m_2(L_\theta)=0$.

\medskip

{\em Step 3.} Fubini's theorem as in Section \ref{sec:fubini} shows that almost surely,
\begin{equation}
\label{eq:bound-on-measure2}
m_2(L_\theta) = 0, \qquad \text{for a.e.~}\theta \in [0,2\pi).
\end{equation}

\medskip

{\em Step 4.} Fix a sequence of coefficients $(\xi_k)_{k=1}^\infty$ for which both (\ref{eq:bound-on-coefficients}) and (\ref{eq:bound-on-measure2}) hold. For $a \in R_\alpha(f,e^{i\theta})$, there exists a sequence $z_j \in \Gamma_\alpha(e^{i\theta})$ converging to $e^{i\theta}$ such that $f(z_j)=a$. Each point $z_j$ lies in some dyadic piece $V_{\alpha}(e^{i\theta}, n_j)$. Since $z_j \to e^{i\theta}$, we have $n_j \to \infty$.
Lemma \ref{f-is-close-to-Sn} implies that
$$
|S_{n_j}(\theta)-a|
\le C(\alpha, \omega) (n_j+1)^{-1/4}<n_j^{-1/8},
$$
for all sufficiently large $j$.
Thus, $a$ belongs to infinitely many of the balls (\ref{eq:the-balls}) and hence $a \in L_\theta$. Therefore,
\begin{equation}
\label{eq:R-inclusion}
R_\alpha(f, e^{i\theta})\subset L_\theta,
\qquad
\text{for all }\theta \in [0,2\pi).
\end{equation}
Combining this inclusion with Step 3, we obtain that $m_2(R_\alpha(f, e^{i\theta})) = 0$ for almost every $\theta$. The proof is complete.
\end{proof}
 
\section{Images of Stolz angles are sparse near infinity}
\label{sec:sparse-near-infinity}

We say that a set $E \subset \mathbb{C}$ has {\em asymptotic density zero} if
$$
\frac{m_2 \bigl (E \cap B(0,R) \bigr )}{\pi R^2} \to 0, \qquad \text{as }R \to \infty.
$$
In this section, we show Theorem \ref{main-thm}(iii), which says that almost surely,
$f(\Gamma_\alpha(e^{i\theta}))$ has asymptotic density zero for every $0 < \alpha < \pi$ and a.e.~$\theta \in [0,2\pi)$.

\begin{proof}[Proof of Theorem \ref{main-thm}{\em (iii)}]
{\em Step 1.}
By Lemma \ref{avoidance-lemma4}, almost surely, the shrinking Wiener sausage
$$
W_{1/16} = \bigcup_{t \ge 0} B(B_t, e^{-t/16})
$$
has asymptotic density zero. Fix $\theta \in [0, 2\pi)$ and as usual, identify the process $(S_n(\theta))$ with $(B_{H_n})$.
Since $H_n = \log n + O(1)$, there exists an absolute constant $N_1 > 0$ such that
$$
n^{-1/8}\le e^{-H_n/16}, \qquad n \ge N_1.
$$
Hence,
$$
U_\theta \, := \, \bigcup_{n \ge N_1} B(S_n(\theta), n^{-1/8}) \, \subset \, W_{1/16}.
$$
Therefore, for every fixed $\theta \in [0,2\pi)$, almost surely, the set $U_\theta$ also has asymptotic density zero.
Applying Fubini's theorem as in Section \ref{sec:fubini}, we conclude that almost surely,
\begin{equation}
\label{eq:sparse-near-infinity2}
\frac{m_2(U_\theta \cap B(0,R))}{\pi R^2} \to 0, \qquad \text{for almost every }\theta.
\end{equation}

\medskip

{\em Step 2.}
Fix a sequence of coefficients $(\xi_k)$ satisfying both (\ref{eq:bound-on-coefficients}) and (\ref{eq:sparse-near-infinity2}). Let $\theta \in [0,2\pi)$ be an angle for which (\ref{eq:sparse-near-infinity2}) holds.
By Lemma \ref{f-is-close-to-Sn}, there exists a constant $C = C(\alpha, \omega)$ such that
$$
|f(z) - S_n(\theta)| \le C (n+1)^{-1/4}, \qquad z \in V_\alpha(e^{i\theta}, n), \quad n \ge 0.
$$
Choose $N_2 = N_2(\alpha, \omega)$ so that $C(n+1)^{-1/4} < n^{-1/8}$ for $n \ge N_2$ and set $N = \max(N_1, N_2)$.
Since
$$f \biggl (\bigcup_{n \ge N} V_\alpha(e^{i\theta}, n) \biggr ) \subset U_\theta,$$
this set has asymptotic density zero.
On the other hand,
$\bigcup_{n<N}V_\alpha(e^{i\theta},n)$
is compactly contained in the unit disk, and hence its image under $f$ is bounded. In particular,
$$
f\biggl(\bigcup_{n<N}V_\alpha(e^{i\theta},n)\biggr)
$$
also has asymptotic density zero. We conclude that their union $f(\Gamma_\alpha(e^{i\theta}))$ has asymptotic density zero.
\end{proof}

\section{The non-tangential range has dimension 2}
\label{sec:non-tangential-range}

In this section, we prove Theorem \ref{ntr-dim2}.
We will use the following classical lemma due to Frostman:

\begin{lemma}
\label{cap-0a}
Suppose that a holomorphic function  $f: \mathbb{D} \to \mathbb{C}$ omits a compact set $L \subset \mathbb{C}$ of positive logarithmic capacity. Then, $f$ belongs to the Nevanlinna class $\mathcal N$. In particular, $f$ has finite non-tangential limits a.e.~on the unit circle.
\end{lemma}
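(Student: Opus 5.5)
The plan is to build a harmonic majorant for $\log^+|f|$ out of the logarithmic potential of the equilibrium measure of $L$. Since $L$ is compact with $\capacity(L) > 0$, it has an equilibrium measure $\mu$, a probability measure on $L$ of finite logarithmic energy $V$ (the Robin constant). Consider the potential
$$
p(w) = \int_L \log |w - t| \, d\mu(t).
$$
It is harmonic on $\mathbb{C} \setminus L$. Frostman's theorem gives $\int \log \frac{1}{|w-t|}\,d\mu(t) \le V$ for every $w \in \mathbb{C}$, that is, $p(w) \ge -V$ everywhere.

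\emph{Step 1: a harmonic majorant on $\mathbb{D}$.} Since $f$ omits $L$, the composition $h = p \circ f$ is harmonic on $\mathbb{D}$, and $h + V \ge 0$. Choose $r_0$ with $L \subset B(0,r_0)$.
\begin{itemize}
\item For $|w| \ge 2r_0$, we have $|w - t| \ge |w|/2$ for all $t \in L$, so $p(w) \ge \log|w| - \log 2$.
\item For $|w| < 2r_0$, we simply have $\log^+|w| \le \log^+(2r_0)$.
\end{itemize}
Combining the two cases with $p \ge -V$ gives a constant $C$ such that $\log^+|w| \le p(w) + V + C$ for all $w \notin L$. Consequently
$$
\log^+|f(z)| \le h(z) + V + C, \qquad z \in \mathbb{D}.
$$

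\emph{Step 2: $f \in \mathcal N$.} Since $h + V$ is a nonnegative harmonic function, the mean value property gives
$$
\frac{1}{2\pi}\int_0^{2\pi} \bigl(h(re^{i\theta}) + V\bigr)\, d\theta = h(0) + V
$$
for every $r<1$. Hence
$$
\sup_{r<1} \int_0^{2\pi} \log^+|f(re^{i\theta})|\,d\theta \le 2\pi\bigl(h(0)+V+C\bigr) < \infty,
$$
which is exactly the statement $f \in \mathcal N$. If $f$ is constant, its value lies off $L$ and everything is trivial, so we may assume $f$ is nonconstant throughout.

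\emph{Step 3: boundary behaviour.} Next we invoke the standard structure theorem for the Nevanlinna class: $f = g_1/g_2$ with $g_1, g_2$ bounded holomorphic and $g_2$ zero-free. One can also see this directly, since $h+V$ is the Poisson integral of a positive measure, and so $f$ is the quotient of bounded functions built from $\exp(-(h + i\tilde h))$. By Fatou's theorem, $g_1$ and $g_2$ have non-tangential limits a.e. Since $g_2 \not\equiv 0$ is bounded, $\log|g_2|$ has integrable boundary values (Jensen / the F. and M. Riesz theorem), so the boundary values of $g_2$ are nonzero a.e. Therefore $f$ has finite non-tangential limits a.e.

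The only genuinely nontrivial input is Frostman's lower bound $p \ge -V$ on all of $\mathbb{C}$, including on $L$ and near irregular points. This is where positivity of capacity is used, and I would cite it from classical potential theory rather than reprove it. Everything else is bookkeeping with the mean value property and Fatou's theorem.
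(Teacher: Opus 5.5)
Your argument is correct and is essentially the paper's. The paper uses the harmonic majorant $G_\Omega(f(z),\infty)+C$, where $G_\Omega(\cdot,\infty)$ is the Green's function of the unbounded component of $\mathbb{C}\setminus L$, and on that component this function coincides with your $p+V$. Frostman's inequality $p\ge -V$ on all of $\mathbb{C}$ removes the paper's case split over which component contains $f(\mathbb{D})$, and your bound $p(w)\ge\log|w|-\log 2$ replaces the cited Green's function asymptotics.
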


\begin{proof}
Since the image of $f$ is connected and disjoint from $L$, it is contained in a
single connected component $\Omega$ of $\mathbb{C} \setminus L$. If $\Omega$ is
bounded, then $f \in H^\infty$ and there is nothing to prove. We may therefore
assume that $\Omega$ is the unbounded component.

Since $L$ has positive capacity, $\Omega$ possesses a Green's function. By \cite[Theorem 5.2.1]{ransford},
$$
 G_{\Omega}(w, \infty) = \log |w| - \log \capacity(L) + o(1), \qquad \text{as }w \to \infty,
$$
where $\capacity(L)$ denotes the logarithmic capacity of $L$. As $G_\Omega(\cdot, \infty)$ is non-negative, there exists a constant $C > 0$ such that
$
\log^+|w| \le G_{\Omega}(w, \infty) + C.
$
Consequently, $$\log^+|f(z)| \le G_{\Omega}(f(z), \infty) + C.$$ Since the right hand side is a harmonic function, $\log^+|f|$ admits a harmonic majorant and so $f\in\mathcal N$. 
\end{proof}

\begin{corollary}
\label{cap-0}
Let $L \subset \mathbb{C}$ be a compact set of positive logarithmic capacity. Then, for a.e.~Plessner point $\zeta \in \partial \mathbb{D}$, 
$$
f(\Gamma_\alpha(\zeta, h)) \cap L \ne \varnothing,
$$
for every $0 < \alpha < \pi$ and $0 < h < 1$.
\end{corollary}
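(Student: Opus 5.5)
We argue by contradiction, localizing Lemma \ref{cap-0a} to a Stolz-angle region via a conformal map. By countability, it suffices to prove the claim for fixed rational $\alpha$ and $h = 1/m$: if for every $\alpha \in \mathbb{Q}\cap(0,\pi)$ and $m \in \mathbb{N}$ the conclusion holds off a null set, then, since truncated Stolz angles increase in both $\alpha$ and $h$, the conclusion holds for all $\alpha,h$ off the countable union of those null sets. So fix $\alpha, h$ and let $E$ be the set of Plessner points $\zeta$ with $f(\Gamma_\alpha(\zeta,h)) \cap L = \varnothing$. We aim to show $|E| = 0$.

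Suppose $|E|>0$. Pass to a closed subset $F \subset E$ of positive measure, and form the sawtooth region
$$\Omega_F = \bigcup_{\zeta \in F} \Gamma_\alpha(\zeta,h).$$
Its interior is a Jordan domain (after taking a component over a suitable portion of $F$; e.g.\ shrink $F$ to lie in a short arc and take the union of the cones, which is simply connected and rectifiable, Lipschitz graph in polar coordinates). By construction $f$ omits $L$ on $\Omega_F$. Let $\varphi:\mathbb{D}\to\Omega_F$ be a Riemann map. Then $g = f\circ\varphi$ is holomorphic on $\mathbb{D}$ and omits $L$, so by Lemma \ref{cap-0a} $g$ has finite non-tangential limits at a.e.\ point of $\partial\mathbb{D}$.

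Now transfer back. Since $\partial\Omega_F$ is rectifiable (Lipschitz), harmonic measure on $\Omega_F$ and arclength on $\partial\Omega_F$ are mutually absolutely continuous (F.\ and M.\ Riesz / Lavrentiev for Lipschitz domains), and $\varphi$ is conformal at a.e.\ boundary point, mapping Stolz angles to Stolz angles inside $\Omega_F$. Also $F \subset \partial\Omega_F$ has positive arclength. Hence for a.e.\ $\zeta\in F$, $g$ has a non-tangential limit at $\varphi^{-1}(\zeta)$, so $f$ has a limit along the curvilinear cones which are images under $\varphi$, and these contain genuine Stolz angles $\Gamma_\beta(\zeta,h')$ with $\beta<\alpha$ at density points $\zeta$ of $F$. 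A standard Lindelöf-type argument (or the classical result that a limit in $\Gamma_\beta$ for some $\beta$ gives a non-tangential limit for holomorphic functions bounded... ) then upgrades this to a finite non-tangential limit of $f$ at $\zeta$. Actually we only need that $f$ has a finite limit in $\Gamma_\beta(\zeta,h')$, which forces $f(\Gamma_\beta(\zeta,h'))$ to be bounded near $\zeta$ and hence not dense — contradicting that $\zeta$ is a Plessner point. Thus $|E|=0$.

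The main obstacle is the geometric step: verifying that for a.e.\ (density) point $\zeta \in F$, a smaller Stolz angle at $\zeta$ is eventually contained in $\Omega_F$ and corresponds under $\varphi^{-1}$ to a non-tangential approach region, and that null sets on $\partial\mathbb{D}$ correspond to null subsets of $F$. The first is elementary (at points of $F$ the boundary of $\Omega_F$ is a Lipschitz graph with a cone condition); for the second I would invoke the absolute continuity of harmonic measure with respect to arclength on Lipschitz (chord-arc) domains, which is standard in \cite{garnett-marshall, pommerenke}.
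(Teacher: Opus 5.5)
Your argument is correct and is essentially the paper's proof: a sawtooth Jordan domain over a compact positive-measure part of the bad set, Lemma \ref{cap-0a} applied to $f\circ\varphi$, and transfer back via the F.~and M.~Riesz theorem and a.e.\ boundary conformality of $\varphi$; your countable reduction to rational $\alpha$ and $h=1/m$ plays the role of the paper's restriction to a subset on which the parameters are constant. Drop the aside about a Lindel\"of-type upgrade, which is false for unbounded $f$; your fallback, that a finite limit inside a single Stolz angle already contradicts the Plessner property, is correct and is all you need, and it does not require density points of $F$, only conformality of $\varphi$ at $\varphi^{-1}(\zeta)$.
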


\begin{proof}
Otherwise, there exists a set $E \subset \partial \mathbb{D}$ of positive measure, consisting entirely of Plessner points, such that for every $\zeta\in E$, the function $f$ omits $L$ on some truncated Stolz angle
$\Gamma_{\alpha(\zeta)}(\zeta,h(\zeta))$. By restricting to a subset of positive measure, we may assume that the parameters $\alpha=\alpha(\zeta)$ and $h=h(\zeta)$ are independent of $\zeta$. Using inner regularity and restricting further if necessary, we may also assume that $E$ is compact and contained in a short arc, so that the sawtooth region
$$
\Omega = \bigcup_{\zeta\in E}\Gamma_\alpha(\zeta,h)
$$
is a Jordan domain with rectifiable boundary.

 Let $\varphi:\mathbb{D}\to\Omega$ be a Riemann map. By Carath\'eodory's theorem, $\varphi$ extends to a homeomorphism between the closures. Since $\partial\Omega$ is rectifiable, the F.~and M.~Riesz theorem \cite[Theorem VI.1.1]{garnett-marshall} implies that the boundary correspondence is absolutely continuous. In particular, $\varphi^{-1}(E)$ has positive Lebesgue measure. Moreover, $\varphi$ has a finite non-zero angular derivative at almost every point of the unit circle. Since $f\circ\varphi$ omits $L$, Lemma \ref{cap-0a} implies that it has a finite non-tangential limit at a.e.~point of the unit circle. Choose a point $\xi\in\varphi^{-1}(E)$ at which $\varphi$ has a finite non-zero angular derivative and $f\circ\varphi$ has a finite non-tangential limit. As explained in \cite[Chapter V.5]{garnett-marshall}, $\varphi$ is then conformal (isogonal) at $\xi$ and carries Stolz angles at $\xi$ to slightly distorted Stolz angles at
$\zeta=\varphi(\xi)$. It follows that $f$ has a finite non-tangential limit at $\zeta$, contradicting the assumption that $\zeta$ is a Plessner point.
\end{proof}

\begin{proof}[Proof of Theorem \ref{ntr-dim2}]
Fix $0 < \alpha < \pi$ and let $P(f)$ be the set of Plessner points of $f$. Recall that
$$
I_n(\zeta) = f(\Gamma_\alpha(\zeta, 2^{-n})), \qquad R_\alpha(f, \zeta) = \bigcap_{n \ge 1} I_n(\zeta).
$$
Fix $0 < d < 1$ and let $K$ be a self-similar Cantor set generated by the contractions
$$
\phi_0(x) = \lambda x, \qquad \phi_1(x) = 1 - \lambda x, \qquad \lambda \, = \, 2^{-1/d} \, < \, 1/2.
$$
By the choice of $\lambda$, the Hausdorff and Minkowski dimensions of $K$ are both equal to $d$. 
For a finite word $\xi = \xi_1 \xi_2 \dots \xi_k \in \{0,1\}^*$, we denote by 
$$
K_\xi = \phi_{\xi_1} \circ \phi_{\xi_2} \circ \dots \circ \phi_{\xi_k}(K)
$$
 the cylinder subset of $K$ corresponding to $\xi$.
By self-similarity, $$\Hdim K_\xi \, = \, \Mdim K_\xi \, = \, d$$ as well. In particular, every cylinder set $K_\xi$ has positive logarithmic capacity.

Fix $w \in \mathbb{C}$. By Corollary \ref{cap-0}, for a.e.~$\zeta \in P(f)$,
$$
I_n(\zeta) \cap (w + K_\xi) \ne \varnothing,
$$
for every $n$ and every cylinder $K_\xi$. It follows that for every $n$,
$$
I_n(\zeta)  \cap (w + K)
$$
is a relatively open and dense subset of $w + K$. Therefore, by Baire's category theorem, $R_\alpha(f, \zeta) \cap (w + K) \ne \varnothing$
for a.e.~$\zeta \in P(f)$.

Applying Fubini's theorem in the product space $(P(f), m_1) \times (\mathbb{C}, m_2)$, we conclude that for a.e.~$\zeta \in P(f)$,
$$
R_\alpha(f, \zeta) \cap (w + K) \ne \emptyset, \qquad \text{for a.e.~}w \in \mathbb{C}.
$$
Equivalently, the complement of $R_\alpha(f,\zeta)-K$ has measure zero. In particular,
$\Hdim (R_\alpha(f,\zeta)-K )=2$.

Since the map $(x, y) \to x - y$ is Lipschitz,
$$
\Hdim \, (R_\alpha(f, \zeta) -K) \le \Hdim \, (R_\alpha (f, \zeta) \times K).
$$
By Marstrand's product theorem, e.g.~see \cite[Equation (3.2.2)]{bishop-peres},
\begin{align*}
\Hdim \, (R_\alpha (f, \zeta) \times K)
& \le \Hdim R_\alpha(f, \zeta) + \Mdim K \\
& = \Hdim R_\alpha(f, \zeta) + d.
\end{align*}
Combining these inequalities, we get $\Hdim R_\alpha(f, \zeta) \ge 2 - d$ for a.e.~$\zeta \in P(f)$. 
Letting $d \to 0$, we obtain $\Hdim R_\alpha(f,\zeta)=2$ for a.e.~$\zeta\in P(f)$. The proof is complete.
\end{proof}

\subsection*{Acknowledgements} This research was supported by the Israel Science Foundation
(grant 3134/21).

\subsection*{Declaration on the use of AI Tools}
The proofs of Theorems 1.1 and 1.2 were found by ChatGPT-6 Astra. The role of the human author is purely expository.

\end{document}